\newcommand\mathcircled[1]{%
  \mathpalette\@mathcircled{#1}%
}
\newcommand\@mathcircled[2]{%
  \tikz[baseline=(math.base)] \node[draw,circle,inner sep=1pt] (math) {$\m@th#1#2$};%
}
\newcommand{\la}{\langle}
\newcommand{\ra}{\rangle}
\newcommand{\Scal}S
\theoremstyle{plain}
\newtheorem{thm}{Theorem}[section]
\newtheorem{lemma}[thm]{Lemma}
\newtheorem{remark}[thm]{Remark}
\def\d{{\rm d}}
\title[Infinitesimal rigidity of Hermitian instantons]
{Infinitesimal rigidity of Hermitian gravitational instantons}
\author[L. Andersson]{Lars Andersson}
\address[Lars Andersson]{University of Potsdam, Department of Mathematics, Karl-Liebknecht-Str. 24-25, 14476 Potsdam, Germany}
\email{lars.andersson@uni-potsdam.de}
\author[B. Araneda]{Bernardo Araneda}
\address[Bernardo Araneda]{School of Mathematics and Maxwell Institute for Mathematical Sciences, University of Edinburgh, EH9 3FD, United Kingdom,  \newline
Max-Planck-Institut f\"ur Gravitationsphysik (Albert-Einstein-Institut), 
Am M\"uhlenberg 1, D-14476 Potsdam, Germany}
\email{baraneda@ed.ac.uk, bernardo.araneda@aei.mpg.de}
\numberwithin{equation}{section}
\begin{document}

\begin{abstract}

We prove infinitesimal rigidity and integrability of the moduli space for Hermitian gravitational instantons.
Together with the recent proof by Biquard, Gauduchon, and LeBrun \cite{BGL} of local rigidity for Hermitian instantons, this completes the picture of the moduli space of Hermitian gravitational instantons, both for the compact and non-compact cases.

An important step in the proof is to show that provided certain boundary
conditions hold, a curve of Riemannian metrics passing through a Hermitian
non-Kähler Einstein metric is conformally Kähler to second perturbative order.
This uses ideas of Wu \cite{Wu} and LeBrun \cite{Lebrun21}. 

\end{abstract} 

\date{\today}
\maketitle

\section{Introduction}

A gravitational instanton is a compact, or complete and asymptotically flat,  Einstein or Ricci-flat 4-manifold. 
While the notion of gravitational instanton is sometimes taken to imply special geometry such as the half-flat condition we do not, unless otherwise stated, impose such restrictions here. With suitable fall-off conditions on curvature, the asymptotically flat instantons can be classified depending on volume growth at infinity, the ones relevant here are asymptotically locally Euclidean (ALE, with quartic volume growth) and asymptotically locally flat (ALF, with cubic volume growth), but there are examples with quadratic, linear and even non-polynomical volume growth, cf. \cite{SunZhang} and references therein. ALF instantons are asymptotic to a circle bundle over $S^2$ at infinity. ALF instantons where this circle bundle is trivial are called asymptotically flat (AF). 

Important examples of gravitational instantons include the Euclidean signature versions of the Schwarzschild and Kerr black holes, the half-flat Taub-NUT and Eguchi-Hansen spaces, Chen-Teo and the compact Page and Chen-LeBrun-Weber spaces. See \cite{Aksteiner:2023djq, Lebrun2010} and references therein. Methods for construction of new instantons include resolution of singularities, gluing, the Gibbons-Hawking method, and inverse scattering. The Gibbons-Hawking method allows one to construct large families of half-flat ALF or ALE instantons, which are in general $S^1$-symmetric. The inverse scattering, or Belinskii-Zakharov method, was used to construct the Chen-Teo instanton \cite{ChenTeo1}, an ALF instanton which is Hermitian with respect to only one orientation \cite{Aksteiner:2021fae}. Symmetry reduction of the Einstein equation leads to a harmonic map equation and this has been exploited in the work of Li and Sun \cite{LiSun}, following the approach of Kunduri and Lucietti \cite{Kunduri:2021xiv}, to construct infinite families of toric AF instantons.

Combining results from Goldberg-Sachs \cite{GoldbergSachs} and Derdzinski \cite{Derdzinski}, LeBrun showed that Hermitian instantons are conformal to extremal K\"ahler spaces \cite{Lebrun95}. See also \cite{Przanowski, ApostolovGauduchon, Gover} for more treatments of the Goldberg-Sachs theorem in Riemannian signature, and \cite[Chapters VIII, IX]{Flaherty} for the Lorentzian case. 
Using the above result, LeBrun provided a complete classification of compact Hermitian-Einstein instantons \cite{Lebrun2010}. 
Biquard and Gauduchon obtained a complete classification of toric, Hermitian non-K\"ahler ALF instantons \cite{Biquard:2021gwj}.
Recently the classification was extended by Li \cite{Li:2023jlq} to Hermitian ALF and ALE instantons. The classification of Hermitian ALE instantons is not complete, but requires some additional conditions on the structure at infinity. 
In the toric case, recent work by Lucietti and one of the authors
provides a classification of Hermitian toric ALE instantons without additional conditions at infinity \cite{AranedaLucietti}.
By the classification results for Hermitian instantons, the new examples of Li and Sun \cite{LiSun} are necessarily algebraically general. In \cite{Aksteiner:2023djq}, one of the authors and collaborators provided steps towards a classification of $S^1$-symmetric instantons. In particular, this result shows that an $S^1$-symmetric instanton with the same topology as the Kerr, Chen-Teo, or Taub-bolt spaces is necessarily in the known moduli space. In view of the results of Li and Sun, the general classification problem is wide open, even for the case of toric instantons. 

The work mentioned above gives many examples of moduli spaces of gravitational instantons. The known moduli spaces of ALF instantons on a manifold with given topology have dimension at most 2\footnote{Our definition of moduli space does not factor out the scaling, in contrast to e.g. \cite{Lebrun21}}. 

Local rigidity for  Hermitian ALF instantons has been  recently proved by Biquard, Gauduchon and LeBrun \cite{BGL}. This result is analogous to the local rigidity theorems valid for Calabi-Yau spaces \cite{DWW}. However, it is a priori possible that the moduli space of gravitational instantons fails to be integrable 
in the sense that there could exist infinitesimal Einstein deformations that are not tangent to curves in the moduli space.
It is a notable open problem whether integrability holds in general for Einstein metrics, see the discussion in \cite[§12.E]{Besse} and \cite[§2.10]{Biquard2007}.
A classical example where integrability fails was given by Koiso \cite{Koiso82}.
Thus the result on local rigidity leaves open the question of infinitesimal rigidity and integrability of Hermitian instantons.
Here, infinitesimal rigidity is the statement that any ALF infinitesimal deformation is, up to gauge, a perturbation with respect to the moduli parameters.
In \cite{Andersson:2024wtn}, we conjectured that integrability holds for Hermitian ALF instantons. 
The following theorem, which is the main result of this paper, settles this question. 

\begin{thm}\label{maintheorem-early}
Let $(M,g_{ab})$ be an ALF Hermitian, non-K\"ahler gravitational instanton. Then $(M,g_{ab})$ is integrable and infinitesimal rigidity holds.
\end{thm}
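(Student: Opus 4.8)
The plan is as follows. Since local rigidity for ALF Hermitian instantons is already available from \cite{BGL}, the crux of the theorem is \emph{integrability}: every ALF infinitesimal Einstein deformation of $g_{ab}$ is tangent to a genuine curve of ALF Einstein metrics. Granting this, infinitesimal rigidity is immediate: if $h_{ab}$ is an ALF infinitesimal Einstein deformation, integrability produces a curve $g_{ab}(t)$ of ALF Einstein metrics with $g_{ab}(0)=g_{ab}$ and $\dot g_{ab}(0)=h_{ab}$, and by the local rigidity of \cite{BGL} the Einstein metrics near $g_{ab}$ exhaust the moduli space, so for small $t$ one may write $g_{ab}(t)=\Phi_t^*g_{\mu(t)}$ with $\mu(t)$ a curve of moduli parameters and $\Phi_t$ a family of diffeomorphisms; differentiating at $t=0$ exhibits $h_{ab}$ as a Lie derivative (pure gauge) plus a perturbation of the moduli parameters, which is infinitesimal rigidity.

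To prove integrability I would reduce it, in the standard way, to the vanishing of the second-order obstruction to prolonging $h_{ab}$ to a formal power-series solution of the Einstein equation --- a quadratic expression in $h_{ab}$ that must be $L^2$-orthogonal to the space of infinitesimal Einstein deformations for a prolongation to exist --- with higher-order terms and convergence then handled by a Kuranishi/implicit-function argument in which weighted elliptic theory on the ALF ends replaces the compact theory. The device for killing the obstruction is to transfer the problem to the Kähler side via the Goldberg--Sachs--Derdzinski structure underlying LeBrun's theorem: $g_{ab}=\phi^{-2}\tilde g_{ab}$, where $\tilde g_{ab}$ is extremal Kähler and $\phi$ is a fixed function of $g_{ab}$ built from $|W^+|_g$. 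The key technical ingredient --- to be established separately along the lines of Wu \cite{Wu} and LeBrun \cite{Lebrun21} --- is that, provided the appropriate ALF boundary conditions hold, any formal curve of Riemannian metrics through the Hermitian non-Kähler Einstein metric $g_{ab}$ that is Einstein to second perturbative order is conformally Kähler to second perturbative order. One then checks that these boundary conditions do hold for ALF infinitesimal Einstein deformations --- the substantive point being control of the conformal factor $\phi(t)$ and of the decay of the induced Kähler deformation at the ALF end --- and rewrites the second-order Einstein obstruction, through the conformal rescaling, as the obstruction for the deformation problem of the extremal Kähler pair $(\tilde g_{ab},J)$: deforming the complex structure and the Kähler class subject to the extremal condition together with the equation that makes the Derdzinski rescaling Einstein.

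On the Kähler side this deformation problem is unobstructed. For the Hermitian ALF instantons at hand the underlying complex surface and its admissible Kähler cone are tightly constrained by the classification of Biquard--Gauduchon \cite{Biquard:2021gwj} and Li \cite{Li:2023jlq} together with the asymptotic conditions, so the complex-structure and Kähler-class contributions are finite dimensional and match the known moduli; the remaining extremal/Einstein equations are then solvable order by order by the openness of the extremal Kähler condition in deformation theory, suitably adapted to the ALF setting. Transporting back through the conformal rescaling, the formal Einstein curve through $g_{ab}$ exists with vanishing obstruction, and the Kuranishi step completes the proof of integrability, hence of the theorem.

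The main obstacle is the asymptotic analysis at the ALF end. In the compact situation treated by LeBrun and Wu the conformal rescaling, the gauge fixing, and the Hodge-theoretic identifications are automatic, whereas here each integration by parts produces boundary contributions at infinity, and the Hodge and exactness statements one needs (that a closed form of prescribed decay represents a given class, or is exact) depend on the precise ALF geometry at infinity rather than on compactness. Verifying that the boundary conditions of the second-order conformal-Kähler statement genuinely hold for ALF infinitesimal Einstein deformations, so that the reduction to the Kähler side is legitimate, is where the bulk of the effort is concentrated.
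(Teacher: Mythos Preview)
Your proposal identifies the right technical ingredient (the Wu/LeBrun-type statement that a suitable curve through $g_{ab}$ is conformally K\"ahler to second perturbative order) and correctly flags the ALF boundary analysis as the main difficulty. But the way you deploy it is both more complicated than what the paper does and, as written, contains a gap.

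The paper does \emph{not} go through Kuranishi obstruction theory at all, and it does not use the local rigidity of \cite{BGL} in the proof. Instead, the divergence identity of Section~\ref{sec:identity} is applied to a curve that is Einstein only to \emph{first} order: under the sole assumptions $E_{ab}|_{s=0}=0$ and $\delta E_{ab}=0$, plus ALF decay, the $O(s^2)$ part of the identity integrated over $M$ reduces to a sum of manifestly nonnegative terms equal to a boundary term at infinity. The ALF fall-off kills the boundary contributions (the second-order quantity $\delta^2 E_{ab}$ appears only in a boundary integral and merely needs to decay, not to vanish), forcing $\delta(\hat\nabla\hat F)=0$. This says directly that every infinitesimal ALF Einstein deformation $\delta g_{ab}$ solves the linearized Hermitian--Einstein condition. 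Since the Hermitian ALF moduli space is classified and smooth \cite{Biquard:2021gwj,Li:2023jlq}, $\delta g_{ab}$ is, up to gauge, a moduli perturbation; this is infinitesimal rigidity, and integrability then follows for free from smoothness of the moduli space. No second-order obstruction has to be computed or killed.

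Your version of the key lemma assumes the curve is Einstein to \emph{second} perturbative order. With that hypothesis your argument is circular: you want to show a first-order Einstein deformation $h$ extends to a second-order Einstein curve, but your tool for transferring the obstruction to the K\"ahler side only applies once such a second-order extension already exists. Nothing in your outline establishes, independently, that a general first-order Einstein deformation is a first-order Hermitian--Einstein deformation, which is exactly what is needed to make the transfer legitimate. The paper avoids this by proving the stronger lemma (first-order Einstein hypothesis suffices), after which the whole Kuranishi/implicit-function apparatus you sketch becomes unnecessary.
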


A step towards this result was to prove mode stability \cite{Andersson:2024wtn}, i.e., to prove that the self-dual part of the linearized Weyl tensor of generic ALF infinitesimal deformations is algebraically special. This suggests that the deformations stay within the same family of instantons, and in this paper we shall prove that this is indeed the case, by using a divergence identity valid for generic Riemannian manifolds with non-self-dual Weyl tensor, which we obtain in section \ref{sec:identity}. The proof of theorem \ref{maintheorem-early} is given in section \ref{sec:rigidity}.

\section{A divergence identity}
\label{sec:identity}

\subsection{Preliminaries}
We shall mainly use the abstract index notation following \cite{PR1}, although we occasionally use an index-free notation. Let $(M,g_{ab})$ be a four-dimensional, smooth, orientable Riemannian manifold, with Levi-Civita connection $\nabla_{a}$, Hodge star operator $\star$ and volume form $\varepsilon_{abcd}$. The metric induces an isomorphism between $TM$ and $T^{*}M$ by raising and lowering indices, which extends to higher-valence tensors. The inner product $\la\cdot,\cdot\ra_{g}$ is defined by
\begin{align}\label{innprod1}
\la A,B \ra_{g}:=g^{a_1b_1}...g^{a_kb_k}A_{a_1...a_{k}}B_{b_1...b_{k}},
\end{align}
for any tensor fields $A,B$ with the same number of indices. The point-wise norm of $A$ is $|A|^2_{g}=\la A,A \ra_{g}$. 
Let $\Lambda^{k}$ be the space of $k$-forms on $M$, and $\Gamma(\Lambda^{k})$ the space of smooth sections. The exterior derivative $d:\Gamma(\Lambda^{k})\to\Gamma(\Lambda^{k+1})$ and the codifferential $d^{*}=-\star d\: \star:\Gamma(\Lambda^{k})\to\Gamma(\Lambda^{k-1})$ are given by
\begin{align}
 (d\alpha)_{a_1...a_{k+1}}=(k+1)\nabla_{[a_1}\alpha_{a_2...a_{k+1}]}, 
 \quad 
(d^{*}\alpha)_{a_1...a_{k-1}} = -g^{bc}\nabla_{b}\alpha_{ca_1...a_{k-1}},
\end{align}
for any $\alpha\in\Gamma(\Lambda^{k})$. The Laplace-de Rham operator $\triangle:\Gamma(\Lambda^{k})\to\Gamma(\Lambda^{k})$ is
\begin{align}\label{LdR}
 \triangle := dd^{*}+d^{*}d.
\end{align}
Its relation to the standard Laplacian, $\Box:=g^{ab}\nabla_{a}\nabla_{b}$, is given by a Weitzenb\"ock identity. For example, for any 2-form $F_{ab}$, we have
\begin{align}\label{LdRF}
(\triangle F)_{ab} = -\Box F_{ab}-R_{ab}{}^{cd}F_{cd} - 2R_{[a}{}^{c}F_{b]c},
\end{align}
where $R_{abc}{}^{d}$ and $R_{ac}=R_{abc}{}^{b}$ are the Riemann and Ricci tensors of $g_{ab}$. 
Our convention for the Riemann tensor is $(\nabla_a\nabla_b-\nabla_b\nabla_a)v_c=R_{abc}{}^{d}v_{d}$ for any $v_a$.

The Hodge star $\star$ satisfies $\star^2=1$ when acting on $\Lambda^2$. This gives a splitting $\Lambda^2=\Lambda^2_{+}\oplus\Lambda^{2}_{-}$, where $\Lambda^{2}_{\pm}$ is the eigenspace of $\star$ with eigenvalue $\pm1$. Elements of $\Lambda^2_{+}$ are called self-dual 2-forms, and elements of $\Lambda^2_{-}$ are anti-self-dual 2-forms. Any $F\in\Lambda^2$ can be decomposed into self-dual and anti-self-dual pieces as $F=F^{+}+F^{-}$, with $F^{\pm}=\frac{1}{2}(F\pm\star F)$. The splitting $\Lambda^2=\Lambda^2_{+}\oplus\Lambda^{2}_{-}$ is orthogonal, meaning that, for any $\alpha\in\Lambda^2$ and $F^{+}\in\Lambda^{2}_{+}$, it holds
\begin{align}\label{orthsplit}
 \la F^{+},\alpha \ra_{g} = \la F^{+},\alpha^{+} \ra_{g},
\end{align}
where a superscript $+$ in $\alpha^{+}$ means ``self-dual part of $\alpha$''. We have
\begin{align}
 \triangle F^{+} ={}& 2 (dd^{*}F^{+})^{+} \label{LdRFp} 
\end{align}
for any $F^{+}\in\Gamma(\Lambda^{2}_{+})$.
To show this, note that since $d^{*}=-\star d\: \star$ and $F^{+}=\star F^{+}$, we have $d^{*}d F^{+}=-\star d\: \star d \star F^{+}=\star dd^{*}F^{+}$. Using this identity, we have $\triangle F^{+}=dd^{*}F^{+}+d^{*}d F^{+}=(1+\star)dd^{*}F^{+}=2 (dd^{*}F^{+})^{+}$, which gives \eqref{LdRFp}.

\smallskip
The above identities can be generalized to tensor-valued $k$-forms. We shall be interested in tensor fields of the form $Z_{a_1...a_{k}}{}^{m_1...m_l}$, where $Z_{a_1...a_{k}}{}^{m_1...m_l}=Z_{[a_1...a_{k}]}{}^{[m_1...m_l]}$. These objects are sections of the tensor bundles $\Lambda^{k}\otimes\Lambda^{l*}$ (where $\Lambda^{l*}$ is the dual of $\Lambda^{l}$), so they can be regarded as maps $\Gamma(\Lambda^{l})\to\Gamma(\Lambda^{k})$. We extend the action of $d$ and $d^{*}$ as
\begin{equation}\label{covED}
\begin{aligned}
 (dZ)_{a_1...a_{k+1}}{}^{m_1...m_l} ={}& (k+1)\nabla_{[a_1}Z_{a_2...a_{k+1}]}{}^{m_1...m_l}, \\
 (d^{*}Z)_{a_1...a_{k-1}}{}^{m_1...m_l} ={}& -g^{bc}\nabla_{b}Z_{ca_1...a_{k-1}}{}^{m_1...m_l}.
\end{aligned}
\end{equation}
The generalization of \eqref{LdRF}, for any $Z_{ab}{}^{cd}\in \Gamma(\Lambda^{2}\otimes\Lambda^{2*})$, is 
\begin{align}\label{LdRZ}
(\triangle Z)_{ab}{}^{cd} = -\Box Z_{ab}{}^{cd}+2R_{[a}{}^{e}Z_{|e|b]}{}^{cd}-R_{ab}{}^{ef}Z_{ef}{}^{cd}+4R_{[a}{}^{e}{}_{|f}{}^{[c}Z_{e|b]}{}^{|f|d]}.
\end{align}

If $Z\in\Gamma(\Lambda^2\otimes\Lambda^{l*})$, we define the dual $\star Z$ by $(\star Z)_{ab}{}^{m_1...m_l}=\frac{1}{2}\varepsilon_{ab}{}^{cd}Z_{cd}{}^{m_1...m_l}$. This gives a splitting $\Lambda^{2}\otimes\Lambda^{l*}=(\Lambda^{2}_{+}\otimes\Lambda^{l*})\oplus(\Lambda^{2}_{-}\otimes\Lambda^{l*})$, and any $Z\in\Gamma(\Lambda^2\otimes\Lambda^{l*})$ can be written as $Z=Z^{+}+Z^{-}$, with $Z^{\pm}=\frac{1}{2}(Z\pm\star Z)$. 
Similarly to \eqref{orthsplit} and \eqref{LdRFp}, we have, for any $F^{+}\in\Gamma(\Lambda^{2}_{+})$ and $Z\in\Gamma(\Lambda^{2}\otimes\Lambda^{2*})$:
\begin{align}
\la F^{+}\otimes F^{+}, Z \ra_{g} ={}& \la F^{+}\otimes F^{+}, Z^{+} \ra_{g}, \label{orthsplit2} \\
\triangle Z^{+} ={}& 2(dd^{*}Z^{+})^{+}. \label{LdRZp}
\end{align}

Let $W_{abc}{}^{d}$ be the Weyl tensor of $g_{ab}$. Raising an index $W_{ab}{}^{cd}=g^{ce}W_{abe}{}^{d}$, we can write $W=W^{+}+W^{-}$, with $W^{\pm}=\frac{1}{2}(W\pm\star W)$. We shall be interested in the self-dual part $W^{+}$. We have the symmetries $W^{+}{}_{ab}{}^{cd}=W^{+}{}_{[ab]}{}^{cd}=W^{+}{}_{ab}{}^{[cd]}=W^{+cd}{}_{ab}$ and the trace-free property $W^{+}{}_{ab}{}^{ad}=0$. The map $W^{+}{}_{ab}{}^{cd}:\Gamma(\Lambda^{2})\to\Gamma(\Lambda^2)$ is real and symmetric, so it can be diagonalized, and its eigenspaces are orthogonal with respect to \eqref{innprod1}. That is, for $i=1,2,3$, we have 
\begin{align}\label{EVdef}
W^{+}{}_{ab}{}^{cd}F^{i}_{cd} = 2 \lambda_{i} F^{i}_{ab}
\end{align}
(no sum over $i$), where $\lambda_i$ are real and we choose the normalization $\la F^{i},F^{j} \ra_{g}=2\delta^{ij}$. We also have (cf. \cite[Eq. (8.3.8)]{PR2})
\begin{subequations}
\begin{align}
\lambda_1+\lambda_2+\lambda_3 ={}&0, \label{EV1} \\
\lambda_1^2+\lambda_2^2+\lambda_3^2 ={}& \frac{1}{4}|W^{+}|^{2}_{g}. \label{EV2}
\end{align}
\end{subequations}
At any point, the three self-dual 2-forms $F^{i}$ give an orthonormal basis of $\Lambda^2_{+}$. The Levi-Civita connection $\nabla:\Gamma(\Lambda^2_{+})\to\Gamma(\Lambda^1\otimes\Lambda^2_{+})$ can be expressed in terms of a local, matrix-valued connection 1-form $\Gamma^{i}{}_{j}=\Gamma_{a}{}^{i}{}_{j}dx^a$ defined by 
\begin{align}\label{connectionform}
\nabla_{a} F^{i}_{bc} = \sum_{j=1}^{3}\Gamma_{a}{}^{i}{}_{j}F^{j}_{bc}.
\end{align}

\subsection{The main identity}

\begin{lemma}
Let $(M,g)$ be an orientable Riemannian manifold, with non-zero self-dual Weyl tensor $W^{+}$. Let $2\lambda_1,2\lambda_2,2\lambda_3$ be the eigenvalues of $W^{+}$ as in \eqref{EVdef}. Assume $\lambda_3\neq0$, and let $F$ be a corresponding eigenvector normalized as $|F|^2_{g}=2$. Define 
\begin{align}
j^{a} := \nabla_{b}F^{ba}, \qquad V^{a} := F^{ab}j_{b}.
\end{align}
Then the following identity holds:
\begin{align}\label{mainidentity}
\nabla_{a}V^{a} = A+B,
\end{align}
where 
\begin{align}
A ={}& \left| j \right|_{g}^2+\frac{1}{12}\left| \nabla F \right|_{g}^2+\frac{1}{3\lambda_3} \left[ (\lambda_1-\lambda_2)^2 - 2\left( \lambda_1 \left| \Gamma^{3}{}_{1} \right|^2_{g} + \lambda_2 \left| \Gamma^{3}{}_{2} \right|^2_{g} \right) \right], \label{Aterm} \\
B ={}& -\frac{1}{6} \la F\otimes F, dd^{*}(\lambda_3^{-1}W^{+}) \ra_{g}. \label{Bterm}
\end{align}
Here, $\Gamma^{3}{}_{1}$ and $\Gamma^{3}{}_{2}$ are the 1-forms defined in \eqref{connectionform}.
\end{lemma}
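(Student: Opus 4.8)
The plan is to expand $\nabla_a V^a$ directly. By the Leibniz rule, $\nabla_a V^a = (\nabla_a F^{ab})j_b + F^{ab}\nabla_a j_b$; the first term is $j^b j_b = |j|^2_g$ (relabel the dummy index in $j^a = \nabla_b F^{ba}$), and in the second, since $F^{ab} = -F^{ba}$, only the antisymmetric part of $\nabla_a j_b$ contributes, so $F^{ab}\nabla_a j_b = \tfrac12 F^{ab}(dj)_{ab} = \tfrac12\langle F, dj\rangle_g = -\tfrac12\langle F, dd^*F\rangle_g$ using $j = -d^*F$. Hence $\nabla_a V^a = |j|^2_g - \tfrac12\langle F, dd^*F\rangle_g$, and the claim is equivalent to $\langle F, dd^*F\rangle_g = -2(A - |j|^2_g) - 2B$, so the whole problem is to evaluate $\langle F, dd^*F\rangle_g$.

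The term $B$ is produced by the eigenvector equation. Rewrite it as $F_{ab} = \tfrac12(\lambda_3^{-1}W^+)_{ab}{}^{cd}F_{cd}$ and set $\hat W := \lambda_3^{-1}W^+$, so that $\hat W_{ab}{}^{cd}F_{cd} = 2F_{ab}$ and, more generally, $\hat W_{ab}{}^{cd}F^i_{cd} = 2(\lambda_i/\lambda_3)F^i_{ab}$. Substituting $F = \tfrac12\hat W(F)$ into the copy of $F$ inside $d^*F = -j$ and distributing the two covariant derivatives (the one in $d^*$ and the one in $d$) by the Leibniz rule, the terms in which both derivatives act on $\hat W$ reassemble, by \eqref{covED} and the antisymmetry of $F$, into $-\tfrac16\langle F\otimes F, dd^*\hat W\rangle_g = B$. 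All the remaining terms carry at most one derivative of $\hat W$ contracted with $F$; these are controlled by differentiating the eigenrelations above, which expresses $(\nabla_e\hat W_{ab}{}^{cd})F^i_{cd}$ through $\nabla(\lambda_i/\lambda_3)$, the connection $1$-forms $\Gamma^i{}_j$ of \eqref{connectionform}, and the $F^i$.

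The remaining input is the Ricci identity together with the curvature decomposition. Since $|F|^2_g = 2$ is constant and $\Gamma^i{}_j$ is $\mathfrak{so}(3)$-valued (so $\Gamma^3{}_3 = 0$), one has $\nabla_a F_{bc} = \Gamma_a{}^3{}_1 F^1_{bc} + \Gamma_a{}^3{}_2 F^2_{bc}$, whence $\tfrac1{12}|\nabla F|^2_g = \tfrac16(|\Gamma^3{}_1|^2_g + |\Gamma^3{}_2|^2_g)$ and $j_a = \Gamma^{b3}{}_1 F^1_{ba} + \Gamma^{b3}{}_2 F^2_{ba}$. Commuting the two derivatives in $\langle F, dd^*F\rangle_g$ — directly, or through the structure equation $\d\Gamma^i{}_j = \Omega^i{}_j - (\Gamma\wedge\Gamma)^i{}_j$ for the $\Lambda^2_+$-connection — brings in the curvature operator restricted to $\Lambda^2_+$: only $W^+$ contributes from the Weyl part, and $F$ is its $\lambda_3$-eigenform, while the Schouten part, paired with the antisymmetric $F^{ab}$, reduces to a multiple of the scalar curvature $s$. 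The two scalar-curvature contributions cancel, and after collecting everything — using $\lambda_1 + \lambda_2 + \lambda_3 = 0$ from \eqref{EV1}, the normalizations $\langle F^i, F^j\rangle_g = 2\delta^{ij}$, and the standard algebraic identities for an orthonormal triple of self-dual $2$-forms (expressing the products $F^i_{ca}F^j{}_d{}^a$ through $g_{cd}$ and the $F^k_{cd}$) — the part of $\langle F, dd^*F\rangle_g$ not absorbed into $B$ becomes exactly $-2(A - |j|^2_g)$.

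The main obstacle is the bookkeeping in the last two steps: tracking the many quadratic connection-form terms, the $\nabla(\lambda_i/\lambda_3)$ cross-terms, and the three-eigenvalue algebra, and in particular verifying that the residue left over after extracting $B$ is precisely $\tfrac1{12}|\nabla F|^2_g + \tfrac1{3\lambda_3}\big[(\lambda_1 - \lambda_2)^2 - 2(\lambda_1|\Gamma^3{}_1|^2_g + \lambda_2|\Gamma^3{}_2|^2_g)\big]$ on the nose — i.e. that the coefficient $\tfrac1{3\lambda_3}$ and the exact combination in brackets come out, with all scalar-curvature and $\nabla(\lambda_i/\lambda_3)$ terms cancelling.
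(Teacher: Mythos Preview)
Your opening reduction $\nabla_aV^a=|j|^2_g-\tfrac12\la F,dd^*F\ra_g$ is correct and matches the paper. After that, your route diverges from the paper's and contains a genuine gap.

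You claim that, after substituting $F=\tfrac12\hat W(F)$ into $d^*F$ and distributing the two derivatives, the terms in which both derivatives land on $\hat W$ reassemble into $B=-\tfrac16\la F\otimes F,dd^*\hat W\ra_g$. A direct check shows otherwise: the double--derivative part of $(dd^*F)_{ef}$ is $-(\nabla_{[e}\nabla^{b}\hat W_{|b|f]}{}^{cd})F_{cd}$, which contracted with $F^{ef}$ gives $\tfrac12\la F\otimes F,dd^*\hat W\ra_g$; multiplying by the overall $-\tfrac12$ yields $-\tfrac14\la F\otimes F,dd^*\hat W\ra_g=\tfrac32B$, not $B$. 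This is not a harmless slip. All of the remaining Leibniz terms carry at most one derivative on $\hat W$ (paired with derivatives of $F$), and the commutator/Ricci step you invoke produces curvature only algebraically; none of these can manufacture the missing $-\tfrac12B$, which is a genuine second--derivative--of--$\hat W$ quantity. Hence the split you announce --- ``double derivative on $\hat W$ gives $B$, the rest gives $A-|j|^2$'' --- cannot hold, and the ``bookkeeping'' you flag as the main obstacle does not close as organised. You would be forced to re-express $\la F\otimes F,dd^*\hat W\ra_g$ itself in terms of connection forms and eigenvalues, i.e.\ to do a second, independent computation of the type you were hoping to avoid.

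The paper proceeds differently. After the same first step it passes to $\triangle F$ via \eqref{LdRFp} and applies the Weitzenb\"ock formula \eqref{LdRF} to obtain the closed expression
\[
\nabla_aV^a=|j|^2_g-\tfrac14|\nabla F|^2_g+\lambda_3-\tfrac{R}{6}.
\]
It then evaluates $\la F\otimes F,dd^*(\lambda_3^{-1}W^+)\ra_g$ \emph{separately}, using \eqref{LdRZp} and the Weitzenb\"ock formula \eqref{LdRZ} for the endomorphism-valued two-form; the key manoeuvre (as in \cite[Lemma~3]{BGL}) is to throw the Laplacian in $F^{ab}F^{cd}\Box(\lambda_3^{-1}W^+_{abcd})$ back onto $F^{ab}F^{cd}$, which reduces that term to $|\nabla F|^2$ together with $W^+_{abcd}\nabla_eF^{ab}\nabla^eF^{cd}=4(\lambda_1|\Gamma^{3}{}_{1}|^2_g+\lambda_2|\Gamma^{3}{}_{2}|^2_g)$. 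This second identity is a linear relation between $B$, $\lambda_3-\tfrac{R}{6}$, $|\nabla F|^2$, $(\lambda_1-\lambda_2)^2/\lambda_3$ and the $\Gamma$-terms. Eliminating $\lambda_3-\tfrac{R}{6}$ between the two displayed identities is what produces \eqref{mainidentity}; the coefficients $\tfrac1{12}$ and $\tfrac1{3\lambda_3}$ in $A$ come precisely from that elimination and are not visible from a single Leibniz expansion.
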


\begin{remark}
The identity \eqref{mainidentity} is inspired by ideas due to Wu \cite{Wu} and LeBrun \cite{Lebrun21}. Wu proved the remarkable result that a compact Einstein metric with the property $\det(W^{+})>0$ is necessarily conformally K\"ahler. An alternative proof was given by LeBrun, that can be adapted to manifolds with a different asymptotic structure as in \cite[Proposition 3]{BGL}. Other works using similar Weitzenb\"ock identities to find K\"ahler structures can be found in the literature, e.g. \cite{Micallef}, \cite{Gursky}.
\end{remark}

\begin{proof}
Using that $j_{a}=-(d^{*}F)_{a}$ and $\star F = F$, together with the identities \eqref{orthsplit}, \eqref{LdRFp}, we have:
\begin{align*}
\nabla_{a}V^{a}={}& (\nabla_{a}F^{ab})j_{b} + F^{ab}\nabla_{a}j_{b} \\
={}& j^{a}j_{a} + \frac{1}{2}F^{ab}(d j)_{ab} \\
={}& j^{a}j_{a} - \frac{1}{2}F^{ab}(dd^{*}F)_{ab} \\
={}& |j|_{g}^2-\frac{1}{2}\la F,dd^{*}F\ra_{g} \\
={}& |j|_{g}^2-\frac{1}{2}\la F,(dd^{*}F)^{+}\ra_{g} \\
={}& |j|_{g}^2-\frac{1}{4}\la F,\triangle F\ra_{g}.
\end{align*}
Now, using \eqref{LdRF} and $|F|_{g}^2=2$, a short calculation gives
\[
\la F,\triangle F\ra_{g}=|\nabla F|^2-R_{abcd}F^{ab}F^{cd}+R.
\]
Using that $F$ is self-dual, we have
\[
R_{abcd}F^{ab}F^{cd} = W^{+}_{abcd}F^{ab}F^{cd}+\frac{R}{3}.
\]
Since $F$ is an eigenform of $W^{+}$ with eigenvalue $2\lambda_3$, and since $|F|^{2}_{g}=2$, we get $W^{+}_{abcd}F^{ab}F^{cd}=4\lambda_3$. 
Thus:
\begin{align}\label{div1}
\nabla_{a}V^{a}= |j|_{g}^2-\frac{1}{4}|\nabla F|_{g}^2 + \lambda_3-\frac{R}{6}.
\end{align}
We shall now find a convenient expression for the term $\lambda_3-\frac{R}{6}$. 
Using 
\[
W^{+}{}_{ab}{}^{cd}F_{cd}=2\lambda_3 F_{ab}, \qquad F_{eb}F^{ab}=\frac{1}{2}\delta_{e}^{a}, \qquad R_{abcd}F^{ab}F^{cd} = 4\lambda_3+\frac{R}{3},
\]
together with \eqref{orthsplit2}, \eqref{LdRZp} and \eqref{LdRZ}, we have:
\begin{align*}
2 \la F\otimes F,dd^{*}(\lambda_3^{-1}W^{+}) \ra_{g}
={}& 2 \la F\otimes F, (dd^{*}(\lambda_3^{-1}W^{+}))^{+} \ra_{g} \\
={}& \la F\otimes F, \triangle(\lambda_3^{-1}W^{+}) \ra_{g} \\
={}& -F^{ab}F^{cd}\Box(\lambda_{3}^{-1}W^{+}_{abcd})
+ 2\lambda_3^{-1}F^{ab}F^{cd}R_{a}{}^{e}W^{+}_{ebcd} \\
& -\lambda_3^{-1}F^{ab}F^{cd}R_{ab}{}^{ef}W^{+}_{efcd} 
+4\lambda_3^{-1}F^{ab}F^{cd}R_{aefc}W^{+e}{}_{b}{}^{f}{}_{d} \\
={}& - F^{ab}F^{cd}\Box(\lambda_{3}^{-1}W^{+}_{abcd}) + 4R_{a}{}^{e}F_{eb}F^{ab} \\
& - 2R_{ab}{}^{ef}F^{ab}F_{ef} + 4\lambda_3^{-1}F^{ab}F^{cd}R_{aefc}W^{+e}{}_{b}{}^{f}{}_{d} \\
={}& - F^{ab}F^{cd}\Box(\lambda_{3}^{-1}W^{+}_{abcd}) + 2R \\
& - 2R_{abcd}F^{ab}F^{cd}  + 4\lambda_3^{-1}F^{ab}F^{cd}R_{aefc}W^{+e}{}_{b}{}^{f}{}_{d} \\
={}& - F^{ab}F^{cd}\Box(\lambda_{3}^{-1}W^{+}_{abcd})
+ 8\left(\frac{R}{6}-\lambda_3 \right) \\
& + 4\lambda_3^{-1}F^{ab}F^{cd}R_{aefc}W^{+e}{}_{b}{}^{f}{}_{d}.
\end{align*}
Now, using the decomposition of the Riemann tensor into the Weyl and Ricci tensors and the curvature scalar, we get
\begin{align*}
\lambda_3^{-1}F^{ab}F^{cd}R_{aefc}W^{+e}{}_{b}{}^{f}{}_{d} 
={}& - W^{+}_{abcd}F^{ab}F^{cd}+\frac{R}{6}+\frac{1}{2\lambda_3}|W^{+}|^2_{g} \\
={}& -4\lambda_3 + \frac{R}{6} + \frac{2}{\lambda_3}(\lambda_1^2+\lambda_2^2+\lambda_3^2) \\
={}& \frac{R}{6} - \lambda_3 +\frac{1}{\lambda_3}(\lambda_1-\lambda_2)^2,
\end{align*}
where we used \eqref{EV1} and \eqref{EV2}. On the other hand, the computation of the term $F^{ab}F^{cd}\Box(\lambda_{3}^{-1}W^{+}_{abcd})$ is analogous to the computation performed in the proof of Lemma 3 in \cite{BGL}:
\begin{align*}
F^{ab}F^{cd}\Box(\lambda_{3}^{-1}W^{+}_{abcd}) ={}& 
\Box(\lambda_{3}^{-1}W^{+}_{abcd}F^{ab}F^{cd}) 
- \lambda_3^{-1}W^{+}_{abcd}\Box(F^{ab}F^{cd}) \\
& -2\nabla^{e}(F^{ab}F^{cd})\nabla_{e}(\lambda_{3}^{-1}W^{+}_{abcd}) \\
={}& + \lambda_3^{-1}W^{+}_{abcd}\Box(F^{ab}F^{cd}) 
-2\nabla_{e}\left[ \lambda_{3}^{-1}W^{+}_{abcd}\nabla^{e}(F^{ab}F^{cd})\right] \\
={}& 2\lambda_3^{-1}W^{+}_{abcd} \left( F^{cd}\Box F^{ab}+\nabla_{e}F^{ab}\nabla^{e}F^{cd} \right) \\ 
&- 4\nabla_{e}\left( \lambda_{3}^{-1}W^{+}_{abcd}F^{cd}\nabla^{e}F^{ab}\right) \\
={}& 4F_{ab}\Box F^{ab} + 2\lambda_3^{-1}W^{+}_{abcd}\nabla_{e}F^{ab}\nabla^{e}F^{cd} - 8\nabla_{e}\left(F_{ab}\nabla^{e}F^{ab}\right) \\
={}& -4 \left| \nabla F \right|_{g}^{2} + 2\lambda_3^{-1}W^{+}_{abcd}\nabla_{e}F^{ab}\nabla^{e}F^{cd}
\end{align*}
where we used that $|F|_{g}^2=2$. To find an expression for $W^{+}_{abcd}\nabla_{e}F^{ab}\nabla^{e}F^{cd}$, we use \eqref{connectionform}, which, with $F^{3}\equiv F$, gives
\[
W^{+}_{abcd}\nabla_{e}F^{ab}\nabla^{e}F^{cd} = 4\left( \lambda_1 \left| \Gamma^{3}{}_{1} \right|^2_{g} + \lambda_2 \left| \Gamma^{3}{}_{2} \right|^2_{g} \right).
\]
Thus, we get:
\begin{align*}
2 \la F\otimes F, dd^{*}(\lambda_3^{-1}W^{+}) \ra_{g} ={}& 
+4 \left| \nabla F \right|_{g}^{2} -\frac{8}{\lambda_3}\left( \lambda_1 \left| \Gamma^{1}{}_{3} \right|^2_{g} + \lambda_2 \left| \Gamma^{2}{}_{3} \right|^2_{g} \right) \\
& +12\left(\frac{R}{6}-\lambda_3 \right)+\frac{4}{\lambda_3}(\lambda_1-\lambda_2)^2
\end{align*}
From here we find an expression for $\lambda_3-\frac{R}{6}$. Substituting back into \eqref{div1}, the result \eqref{mainidentity},\eqref{Aterm},\eqref{Bterm} follows.
\end{proof}

\section{Infinitesimal rigidity}
\label{sec:rigidity}

\subsection{Conformal transformations}
\label{sec:conformal}

Let $M$ be a smooth 4-manifold. We shall be interested in two conformally related Riemannian metrics in $M$:
\begin{align}\label{conformal}
\hat{g}_{ab} = (\hat\lambda_3)^2 g_{ab},
\end{align}
where $\hat\lambda_3$ is assumed to be a nonzero eigenvalue of the Weyl tensor of $\hat{g}_{ab}$, that is:
\begin{align}
\hat{W}^{+}{}_{ab}{}^{cd}\hat{F}_{cd} = 2 \hat\lambda_3 \hat{F}_{ab}, 
\qquad 
|\hat{F}|_{\hat{g}}^{2}=2.
\end{align}
The Levi-Civita connection of $\hat{g}_{ab}$ ($g_{ab}$) is $\hat\nabla_{a}$ ($\nabla_{a}$). Indices of hatted (unhatted) quantities are raised and lowered with $\hat{g}_{ab}$ ($g_{ab}$) and its inverse. Since the identity \eqref{mainidentity} is valid for generic Riemannian manifolds, it applies in particular to $\hat{g}_{ab}$:
\begin{equation}\label{mainidentityhat}
\begin{aligned}
& \hat\nabla_{a}\hat{V}^{a} = \hat{A}+\hat{B}, \\
& \hat{A} = | \hat{j} |_{\hat{g}}^2+\frac{1}{12} |\hat{\nabla}\hat{F}|_{\hat{g}}^2 + \frac{1}{3\hat{\lambda}_3} \left[ (\hat{\lambda}_1-\hat{\lambda}_2)^2 - 2\left( \hat{\lambda}_1 |\hat{\Gamma}^{3}{}_{1}|^2_{\hat{g}} + \hat{\lambda}_2 |\hat{\Gamma}^{3}{}_{2}|^2_{\hat{g}} \right) \right],\\
& \hat{B} = -\frac{1}{6}\la\hat{F}\otimes \hat{F}, \hat{d}\hat{d}^{*}(\hat{\lambda}_3^{-1}\hat{W}^{+})\ra_{\hat{g}}. 
\end{aligned}
\end{equation}
Here, $\hat{j}^{a}=\hat\nabla_{b}\hat{F}^{ba}$ and $\hat{V}^{a}=\hat{F}^{ab}\hat{j}_{b}$. The term $\hat{B}$ can be written as 
\begin{align}\label{Bindices}
\hat{B} = \frac{1}{3}\hat{F}^{ab}\hat{F}^{cd}\hat{g}^{ef}\hat\nabla_{a}\hat\nabla_{f}(\hat\lambda_{3}^{-1}\hat{W}^{+}_{ebcd}).
\end{align}
Using the relation between $\nabla_{a}$ and $\hat\nabla_{a}$, we have
\begin{align}\label{conformalrelation}
\hat{g}^{ef}\hat\nabla_{f}(\hat\lambda_{3}^{-1}\hat{W}^{+}_{ebcd}) = 
\hat\lambda_{3}^{-1}g^{ef}\nabla_{f}W^{+}_{ebcd} 
\end{align}
where $W^{+}_{abcd}$ is the self-dual Weyl tensor of $g_{ab}$. Replacing  \eqref{conformalrelation} into \eqref{Bindices} and using Bianchi identities (cf. \cite[Eq. (4.10.7)]{PR1}), after a short calculation we find
\begin{align}\label{Bindices2}
\hat{B} = -\frac{1}{3}\hat{P}^{abcd}\hat\nabla_{a}\left( \hat\lambda_{3}^{-1}\nabla_{d}E_{bc} \right), 
\end{align}
where $E_{bc}=R_{bc}-\frac{R}{4}g_{bc}$ is the trace-free Ricci tensor of $g_{ab}$, and the tensor field $\hat{P}^{abcd}$ is 
\begin{align*}
\hat{P}^{abcd}=\hat{F}^{ab}\hat{F}^{cd}-\frac{1}{6}(\hat{g}^{ac}\hat{g}^{bd}-\hat{g}^{ad}\hat{g}^{bc}+\hat\varepsilon^{abcd}), 
\end{align*}
with $\hat\varepsilon_{abcd}$ the volume form of $\hat{g}_{ab}$. We shall need the expression \eqref{Bindices2} below.

\subsection{ALF curves}

Consider a smooth one-parameter family of metrics $\hat{g}_{ab}(s)$ in $M$. A  tensor field $T(s)$ constructed from the metric has an expansion
\begin{align}\label{analytic}
 T(s) = T+ (\delta{T}) \, s + \frac{1}{2!}(\delta^2{T}) \, s^{2} + O(s^{3}),
\end{align}
where $T,\delta{T},\delta^2{T}, ... $ are tensor fields in $M$,
\begin{align}
\delta^{k} T = \left. \left[\frac{d^{k} }{d s^{k}}T(s)\right] \right|_{s=0}.
\end{align}
We say that $T= T(s)\big|_{s=0}$ is the ``background'' value of $T(s)$, $\delta{T}$ is the linearization of $T(s)$ at $T$, and \eqref{analytic} is a ``perturbative expansion''.

The Levi-Civita connection of $\hat{g}_{ab}(s)$ is denoted $\hat{\nabla}^{(s)}_{a}$, and its relation to the background Levi-Civita connection is (see \cite[Eq. (7.5.7)]{Wald})
\begin{align}\label{nablas}
\hat{\nabla}^{(s)}_{a}u_{b} = \hat{\nabla}_{a}u_{b} - \hat{C}_{ab}{}^{c}(s) \, u_{c}
\end{align}
for all $u_{a}$, where $\hat{\nabla}_{a}=\hat{\nabla}_{a}^{(0)}$ and
\begin{align}\label{difftensor}
\hat{C}_{ab}{}^{c}(s) = \frac{1}{2}\hat{g}^{cd}(s)
\left[ \hat\nabla_{a}\hat{g}_{bd}(s) + \hat\nabla_{b}g_{ad}(s) - \hat\nabla_{d}\hat{g}_{ab}(s) \right].
\end{align}
Note that $\hat{C}_{ab}{}^{c}(s) = s\,\delta{\hat{C}}_{ab}{}^{c} + O(s^2)$. 

We shall now study perturbations of the identity \eqref{mainidentityhat}. A perturbative expansion of the eigenvalue problem of the map $\hat{W}^{+}{}_{ab}{}^{cd}$ is justified by the fact that the latter is real and symmetric, see \cite{Kato}. For the conformal factor in \eqref{conformal}, we choose $\hat\lambda_3$ to be a positive eigenvalue. This is justified by assuming that $g_{ab}(0)$ is Hermitian-Einstein, so there is always a positive eigenvalue, and by continuity this holds for $s$ close to zero.

\begin{lemma}\label{lemma:perturbations}
Let $g_{ab}(s)$ be a smooth curve of Riemannian metrics in $M$, and consider the conformally related metrics $\hat{g}_{ab}(s)$ as in section \ref{sec:conformal}.
Suppose that $g_{ab}=g_{ab}(0)$ is Hermitian-Einstein and that $\frac{d}{d s}g_{ab}(s)|_{s=0}$ is an infinitesimal Einstein deformation, that is
\begin{align}
 \left. E_{ab}(s) \right|_{s=0} = 0, \qquad \left. \frac{d}{d s} E_{ab}(s) \right|_{s=0} = 0, \label{assumptions}
\end{align}
where $E_{ab}(s)$ is the trace-free Ricci tensor of $g_{ab}(s)$. Then the perturbative expansion of the identity \eqref{mainidentityhat} gives the following formulas:
\begin{subequations}\label{PertExpMI}
\begin{align}
& \int_{M}\hat\nabla_{a}\hat{V}^a = \left\{\oint_{\partial M} \hat{F}\wedge\hat{\star}d\:\delta\hat{F}\right\} s + \left\{\oint_{\partial M} \delta^2(\hat{F}\wedge\hat{\star}d \, \hat{F})\right\}\frac{s^2}{2!} + O(s^3), \label{PertExpdV} \\
& \int_{M}\hat{A} = \int_{M}\left\{\frac{|\delta(\hat\nabla\hat{F})|_{\hat{g}}^2}{12} + |\delta\hat{j}|_{\hat{g}}^2 + \frac{(\delta\hat\lambda_1-\delta\hat\lambda_2)^2}{3\hat\lambda_3}+\frac{( |\delta\hat{\Gamma}^{3}{}_{1}|_{\hat{g}}^2 + |\delta\hat{\Gamma}^{3}{}_{2}|_{\hat{g}}^2)}{3} \right\} \frac{s^2}{2!} + O(s^3) \label{PertExpA} \\
& \int_{M}\hat{B} = \left\{-\frac{1}{3}\oint_{\partial M}(\hat\lambda_{3}^{-1}\hat{P}^{abcd}\nabla_{d}\delta^2 E_{bc}) \d\Sigma_a\right\}\frac{s^2}{2!} + O(s^3),\label{PertExpB}
\end{align}
\end{subequations}
\end{lemma}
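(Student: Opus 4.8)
The plan is to expand the global identity $\int_M\hat\nabla_a\hat V^a=\int_M\hat A+\int_M\hat B$ order by order in $s$, using the divergence theorem on the left and the structural formulas \eqref{Aterm}, \eqref{Bindices2} on the right. First I would record the general fact that for any vector field $X(s)$ on $M$ we have $\int_M\hat\nabla_a\hat X^a=\oint_{\partial M}\hat X^a\,\d\Sigma_a$ (with $\d\Sigma_a$ the induced boundary element of $\hat g_{ab}(s)$), and that the perturbative coefficients of this boundary integral are obtained by applying $\delta^k$ to the integrand and the measure jointly; this gives \eqref{PertExpdV} once one observes that $\hat V^a=\hat F^{ab}\hat j_b$ with $\hat j=-\hat d^{*}\hat F$, so that $\hat F\wedge\hat\star\,\hat d\,\hat F$ is (up to sign and normalization) the $1$-form dual to $\hat V$, i.e. $\hat V^a\,\d\Sigma_a=-\hat F\wedge\hat\star\,\hat d\,\hat F$ on $\partial M$. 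The zeroth order term vanishes because on the background $\hat g$ is Hermitian-Einstein, hence conformally Kähler with $\hat F$ the (normalized) Kähler form up to the conformal factor, so $\hat d\,\hat F=0$ at $s=0$ and the $O(1)$ boundary term is absent; the $O(s)$ coefficient is then $\oint_{\partial M}\hat F\wedge\hat\star\,d\,\delta\hat F$ and the $O(s^2)$ coefficient is $\oint_{\partial M}\delta^2(\hat F\wedge\hat\star\,d\,\hat F)$, which is exactly \eqref{PertExpdV}.

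Next I would treat $\int_M\hat A$. The key point is that every term in $\hat A$ as written in \eqref{Aterm} is, after multiplying by the appropriate power of $\hat\lambda_3$, a sum of \emph{squares} (the $|\hat\jmath|^2$, $|\hat\nabla\hat F|^2$, $(\hat\lambda_1-\hat\lambda_2)^2$, $|\hat\Gamma^3{}_1|^2$, $|\hat\Gamma^3{}_2|^2$ pieces) whose background value vanishes: on a Hermitian-Einstein background $\hat g$ is Kähler, so $\hat\nabla\hat F=0$ at $s=0$, hence $\hat\jmath=0$, $\hat\Gamma^3{}_1=\hat\Gamma^3{}_2=0$, and moreover $\hat\lambda_1=\hat\lambda_2$ at $s=0$ (the two ``non-Kähler'' eigenvalues coincide for a Kähler metric in dimension four). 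Since a smooth nonnegative function that is a square $Q(s)^2$ with $Q(0)=0$ has vanishing first derivative and second derivative $2(\delta Q)^2$, the $O(s)$ coefficient of $\hat A$ vanishes identically and the $O(s^2)$ coefficient is the sum of the linearized squares displayed in \eqref{PertExpA}; the $\hat\lambda_3$ in the background (which is positive by the continuity argument preceding the lemma) can be pulled out of the linearizations because only $\delta$ of the numerators survives. One small care point here is the term $\frac1{3\hat\lambda_3}\cdot 2(\hat\lambda_1|\hat\Gamma^3{}_1|^2+\hat\lambda_2|\hat\Gamma^3{}_2|^2)$: at $O(s^2)$ this contributes $\frac{2}{3\hat\lambda_3}(\hat\lambda_1|\delta\hat\Gamma^3{}_1|^2+\hat\lambda_2|\delta\hat\Gamma^3{}_2|^2)$, and using $\hat\lambda_1=\hat\lambda_2$ and $2\hat\lambda_1=2\hat\lambda_2=-\hat\lambda_3$ on the background (from \eqref{EV1} together with the Kähler identity $\hat\lambda_1=\hat\lambda_2$) this collapses to $-\frac{1}{3}(|\delta\hat\Gamma^3{}_1|^2+|\delta\hat\Gamma^3{}_2|^2)$, which combines with the $+\frac1{3\hat\lambda_3}\cdot(-2)(\dots)$ sign in \eqref{Aterm} to produce exactly the $+\frac13(|\delta\hat\Gamma^3{}_1|^2+|\delta\hat\Gamma^3{}_2|^2)$ appearing in \eqref{PertExpA}. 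I would make this arithmetic explicit since it is the one genuinely non-obvious cancellation.

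Finally I would handle $\int_M\hat B$ using the form \eqref{Bindices2}, $\hat B=-\tfrac13\hat P^{abcd}\hat\nabla_a(\hat\lambda_3^{-1}\nabla_d E_{bc})$. The crucial observation is that $\hat B$ is an exact divergence of the vector field $\hat X^a:=-\tfrac13\hat P^{abcd}\hat\lambda_3^{-1}\nabla_d E_{bc}$ \emph{modulo} a term proportional to $(\nabla\hat P)(\nabla E)$; but by the Bianchi/symmetry identities for $\hat P^{abcd}$ (which is built from $\hat F\otimes\hat F$ minus a multiple of the ``identity'' on self-dual two-forms) this correction term is itself a contraction of $\nabla E$ against curvature and thus, after integration by parts, reducible to boundary data plus terms with \emph{two} factors of $E$ or $\nabla E$; since $E$ and $\delta E$ both vanish by \eqref{assumptions}, every bulk term is $O(s^2)$ and, crucially, the $O(s^2)$ bulk contributions all carry a factor $\delta E\cdot(\cdots)$ or $E\cdot(\cdots)$ evaluated on the background and hence vanish, leaving only the pure boundary term $-\tfrac13\oint_{\partial M}\hat\lambda_3^{-1}\hat P^{abcd}\nabla_d E_{bc}\,\d\Sigma_a$, whose $O(1)$ and $O(s)$ coefficients vanish (as $E=\delta E=0$) and whose $O(s^2)$ coefficient is $-\tfrac13\oint_{\partial M}\hat\lambda_3^{-1}\hat P^{abcd}\nabla_d\delta^2 E_{bc}\,\d\Sigma_a$, giving \eqref{PertExpB}. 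Concretely: write $\hat B=\hat\nabla_a\hat X^a+\tfrac13(\hat\nabla_a\hat P^{abcd})\hat\lambda_3^{-1}\nabla_d E_{bc}$, integrate, apply the divergence theorem to the first term, and integrate the second term by parts once more; in each resulting bulk integral the integrand is a product in which at least one factor is $E_{bc}$, $\nabla_d E_{bc}$, or $\nabla\nabla E$, so its Taylor expansion starts at $O(s^2)$ and its $O(s^2)$ coefficient contains a factor $\delta E$ (when one $\nabla$ has been moved off) or reduces to a boundary integral; tracking these shows all bulk contributions through $O(s^2)$ vanish under \eqref{assumptions}. The main obstacle, and the step I would spend the most care on, is precisely this bookkeeping in $\hat B$: one must verify that after the integrations by parts \emph{no} bulk term of order $s^2$ survives — equivalently that every such term is either quadratic in $(E,\nabla E)$ or comes with a loose $\delta E$ — because a single overlooked term linear in $\delta^2 E$ in the bulk would break the identity. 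Establishing this rests on the algebraic identities satisfied by $\hat P^{abcd}$ (trace-freeness, the self-dual eigenvalue normalization of $\hat F$, and the second Bianchi identity used already to pass from \eqref{Bindices} to \eqref{Bindices2}), so I would isolate and state those identities as a sub-lemma before doing the expansion.
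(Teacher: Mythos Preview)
Your treatment of \eqref{PertExpdV} and \eqref{PertExpA} is essentially the paper's argument: you correctly identify $\hat\star\hat V=\hat F\wedge\hat\star d\hat F$, use $d\hat F|_{s=0}=0$ on the K\"ahler background, and for $\hat A$ you correctly exploit that each factor $\hat j$, $\hat\nabla\hat F$, $\hat\lambda_1-\hat\lambda_2$, $\hat\Gamma^3{}_1$, $\hat\Gamma^3{}_2$ vanishes at $s=0$, together with the background relation $\hat\lambda_1=\hat\lambda_2=-\hat\lambda_3/2$ to collapse the connection-form term. That part is fine.

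Your handling of \eqref{PertExpB}, however, has a genuine gap. After writing $\hat B=\hat\nabla_a\hat X^a+\tfrac13(\hat\nabla_a\hat P^{abcd})\hat\lambda_3^{-1}\nabla_d E_{bc}$ and integrating the second term by parts, you obtain a bulk integrand of the schematic form $(\text{background quantity})\cdot E_{bc}$. Its $O(s^2)$ coefficient is \emph{not} proportional to $\delta E$ as you claim, but to $\delta^2 E$: if $E(0)=\delta E(0)=0$ then the second derivative of $A(s)E(s)$ at $s=0$ is $A(0)\,\delta^2 E$, which does not vanish unless $A(0)=0$. So your bookkeeping does not close, and the ``sub-lemma on algebraic identities for $\hat P$'' you propose would not help, since the obstruction is differential, not algebraic.

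The missing observation is simply that $\hat P^{abcd}$ is $\hat\nabla$-\emph{parallel} on the background: $\hat g(0)$ is K\"ahler, so $\hat F$, $\hat g$, and $\hat\varepsilon$ are all $\hat\nabla$-parallel at $s=0$, hence $\hat\nabla_a\hat P^{bcde}\big|_{s=0}=0$. With this in hand the argument is one line: since $E=O(s^2)$, differentiating \eqref{Bindices2} twice at $s=0$ gives
\[
\delta^2\hat B=-\tfrac13\,\hat P^{abcd}(0)\,\hat\nabla_a\!\left[\hat\lambda_3(0)^{-1}\nabla_d\,\delta^2 E_{bc}\right]
=-\tfrac13\,\hat\nabla_a\!\left[\hat\lambda_3(0)^{-1}\hat P^{abcd}(0)\,\nabla_d\,\delta^2 E_{bc}\right],
\]
an exact divergence with no residual bulk term; the divergence theorem then yields \eqref{PertExpB} directly. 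Equivalently, your ``correction term'' $(\hat\nabla_a\hat P^{abcd})\hat\lambda_3^{-1}\nabla_d E_{bc}$ is $O(s)\cdot O(s^2)=O(s^3)$ and simply does not contribute at second order. This is how the paper proceeds, and it replaces your entire integration-by-parts scheme.
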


\begin{proof}
We first show \eqref{PertExpdV}. Recall the general identity $(\hat\nabla_{a}\hat{V}^{a}) \, \hat{\pmb{\varepsilon}} = d(\hat\star\hat{V})$, where $\hat{\pmb{\varepsilon}}$ is the volume form of $\hat{g}$, and on the right side, $\hat{V}$ is interpreted as a 1-form. One can check that $\hat\star\hat{V}=\hat{F}\wedge\hat\star d\hat{F}$. Thus:
\[
(\hat\nabla_{a}\hat{V}^{a}) \, \hat{\pmb{\varepsilon}} = 
d(\hat{F}\wedge\hat\star d\hat{F}).
\]
The perturbative expansion of $\hat{F}\wedge\hat\star d\hat{F}$ is:
\begin{align}\label{PertExp3form}
 (\hat{F}\wedge\hat\star d\hat{F})(s) = 
 \delta(\hat{F}\wedge\hat\star d\hat{F}) \, s + \delta^2(\hat{F}\wedge\hat\star d\hat{F}) \, \frac{s^2}{2!} + O(s^3), 
\end{align}
with
\begin{equation}\label{PertExp3form2}
\begin{aligned}
 \delta(\hat{F}\wedge\hat\star d\hat{F}) ={}& \hat{F}\wedge\hat\star d(\delta\hat{F}), \\
 \delta^2(\hat{F}\wedge\hat\star d\hat{F}) ={}& \hat{F}\wedge\hat\star d(\delta^2\hat{F}) + 2\delta\hat{F}\wedge\hat\star d(\delta\hat{F}) + 2\hat{F}\wedge(\delta\hat\star)d(\delta\hat{F}),
\end{aligned}
\end{equation}
where we used that $d\hat{F}|_{s=0}=0$. This gives \eqref{PertExpdV}.

We now focus on \eqref{PertExpA}. Since $\hat{g}_{ab}=\hat{g}_{ab}(0)$ is K\"ahler, the background self-dual Weyl tensor $\hat{W}^{+}_{abcd}$ has a double eigenvalue: $\hat\lambda_1 = \hat\lambda_2 = -\hat\lambda_3 / 2$, where $\hat\lambda_3>0$.
The parallel background K\"ahler form is $\hat{F}_{ab}$. We then have:
\begin{align*}
\hat{g}_{ab}(s) ={}& \hat{g}_{ab} + \delta\hat{g}_{ab} \, s + O(s^2), \\
\hat{j}_{a}(s) ={}&  \delta\hat{j}_{a} \, s + O(s^2), \\
\hat\nabla^{(s)}\hat{F}(s) ={}& \delta(\hat\nabla\hat{F})\, s + O(s^2), \\
\hat\lambda_1(s) ={}& -\hat\lambda_3 / 2 + \delta\hat\lambda_1 \, s + O(s^2), \\
\hat\lambda_2(s) ={}& -\hat\lambda_3 / 2 + \delta\hat\lambda_2 \, s + O(s^2), \\
\hat{\Gamma}_{a}{}^{3}{}_{1}(s) ={}& \delta\hat{\Gamma}_{a}{}^{3}{}_{1}\, s + O(s^2), \\
\hat{\Gamma}_{a}{}^{3}{}_{2}(s) ={}& \delta\hat{\Gamma}_{a}{}^{3}{}_{2}\, s + O(s^2).
\end{align*}
Replacing into the formula for $\hat{A}$ in \eqref{mainidentityhat}, we get \eqref{PertExpA}.

Finally, we show \eqref{PertExpB}. We shall use the expression \eqref{Bindices2}. Recalling \eqref{assumptions}, we have $\hat{B}(s) = (\delta^{2} \hat{B}) \, \frac{s^2}{2!} + O(s^3)$, and 
\begin{align*}
\delta^{2} \hat{B} ={}& \left. \frac{d^2}{d s^2}\hat{B}(s) \right|_{s=0} \\
={}& -\tfrac{1}{3}\hat{P}^{abcd}(0)\hat\nabla^{(0)}_{a}\left[\hat\lambda_{3}(0)^{-1}\nabla^{(0)}_{d}\delta^2E_{bc}\right] \\
={}& -\tfrac{1}{3}\hat\nabla^{(0)}_{a}\left[\hat\lambda_{3}(0)^{-1}\hat{P}^{abcd}(0)\nabla^{(0)}_{d}\delta^2E_{bc}\right]  
\end{align*}
where $\delta^{2} E_{bc} = \frac{d^2}{d s^2}E_{bc}(s) \big|_{s=0}$, and in the third line we used that $\hat\nabla^{(0)}_{a}\hat{P}^{bcde}(0)=0$. 
Integrating and using the divergence theorem, \eqref{PertExpB} follows.
\end{proof}

\begin{remark}
Since $(\hat\nabla_{a}\hat{V}^a)(s)=\hat{A}(s)+\hat{B}(s)$, and since we see from \eqref{PertExpMI} that the $O(s)$ terms of $\hat{A}(s)$ and $\hat{B}(s)$ vanish, from \eqref{PertExpdV} we deduce that 
\begin{align}\label{order1}
d(\delta\hat{F})=0. 
\end{align}
This was also obtained in \cite{Andersson:2025oae} with different methods.
\end{remark}

We shall now add boundary conditions. We are interested in ALF metrics, cf. \cite[Definition 1.1]{Biquard:2021gwj} for the definition.

\begin{lemma}\label{lemma:linearizedKS}
Under the hypotheses of Lemma \ref{lemma:perturbations}, suppose in addition that the curve $g_{ab}(s)$ is ALF. Then
\begin{align}\label{linearizedKS}
 \left. \frac{d}{d{s}}\left[\hat\nabla_{a}^{(s)}\hat{F}_{bc}(s)\right] \right|_{s=0} = 0.
\end{align}
\end{lemma}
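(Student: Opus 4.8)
The plan is to substitute the perturbative expansions \eqref{PertExpMI} into the identity $\int_M\hat\nabla_a\hat V^a = \int_M\hat A + \int_M\hat B$ and compare the coefficients of $s^2/2!$. Since, according to \eqref{PertExpMI}, the $O(s^0)$ and $O(s)$ contributions to $\int_M\hat A$ and $\int_M\hat B$ both vanish, the second-order part of the identity reads
\[
\begin{aligned}
&\int_M\left\{\frac{|\delta(\hat\nabla\hat F)|^2_{\hat g}}{12} + |\delta\hat j|^2_{\hat g} + \frac{(\delta\hat\lambda_1-\delta\hat\lambda_2)^2}{3\hat\lambda_3} + \frac{|\delta\hat\Gamma^{3}{}_{1}|^2_{\hat g} + |\delta\hat\Gamma^{3}{}_{2}|^2_{\hat g}}{3}\right\} \\
&\qquad = \oint_{\partial M}\delta^2(\hat F\wedge\hat\star d\hat F) + \frac{1}{3}\oint_{\partial M}\hat\lambda_3^{-1}\hat P^{abcd}\nabla_d\delta^2 E_{bc}\,\d\Sigma_a .
\end{aligned}
\]
The integrand on the left is a sum of pointwise non-negative quantities: the three norm terms are manifestly so, and $(\delta\hat\lambda_1-\delta\hat\lambda_2)^2/(3\hat\lambda_3)\ge 0$ because $\hat\lambda_3>0$ by the choice of conformal factor fixed in section \ref{sec:conformal}. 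Hence, once the two boundary integrals on the right are shown to vanish, the left-hand integral vanishes, so each of the four non-negative terms vanishes identically on $M$; in particular $\delta(\hat\nabla\hat F)\equiv 0$, which, unwinding the notation $\delta(\hat\nabla\hat F)=\tfrac{d}{ds}[\hat\nabla^{(s)}_a\hat F_{bc}(s)]|_{s=0}$, is exactly \eqref{linearizedKS}.

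Everything thus reduces to showing that the ALF hypothesis kills both boundary fluxes, and this is where the ``boundary conditions'' of the abstract enter essentially. For the first term I would expand $\delta^2(\hat F\wedge\hat\star d\hat F)$ via \eqref{PertExp3form2} (recalling $d\hat F|_{s=0}=0$): every summand carries a first or second exterior derivative of a variation of $\hat F$, so one needs the ALF decay rates of $\hat F,\delta\hat F,\delta^2\hat F$ and of $\delta\hat\star$ to ensure that the integrand over the cross-section at infinity decays strictly faster than that cross-section's area grows (quadratically, by cubic volume growth). For the second term, $\hat P^{abcd}$ stays bounded at infinity while $\hat\lambda_3^{-1}$ grows at the rate at which the self-dual Weyl curvature of the ALF Kähler background degenerates; the required cancellation comes from the curvature-type ALF decay of $\nabla_d\delta^2 E_{bc}$, which $g_{ab}(s)$ being an ALF curve — together with $E(0)=0$ and $\delta E=0$ — provides. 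The precise rates follow from the definition of ALF metric \cite[Definition 1.1]{Biquard:2021gwj} and elliptic estimates for the relevant linearized equations, and the mechanism parallels the boundary analysis of \cite[Proposition 3]{BGL}.

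I expect the hard part to be precisely this last point: controlling $\hat\lambda_3^{-1}\hat P^{abcd}\nabla_d\delta^2 E_{bc}$ near infinity, where the growth of $\hat\lambda_3^{-1}$ has to be beaten by the decay of the second variation of the trace-free Ricci tensor and by integration against the $r^2$-growing area element, so that the whole flux through the ALF boundary tends to zero. Everything else — the algebraic rearrangement of \eqref{PertExpMI}, the pointwise positivity, and the final conclusion $\delta(\hat\nabla\hat F)\equiv0$ — is then immediate.
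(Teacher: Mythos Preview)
Your strategy is exactly the paper's: equate the $s^2/2!$ coefficients, show the two boundary fluxes vanish under ALF decay, and invoke pointwise positivity. However, your execution of the boundary analysis has gaps and two inaccuracies worth flagging.

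For the first flux you miss a simplification that the paper actually relies on. From the $O(s)$ part of the identity one already knows $d(\delta\hat F)=0$ (this is the content of the Remark following Lemma~\ref{lemma:perturbations}). Feeding that back into \eqref{PertExp3form2} kills the two terms containing $d(\delta\hat F)$, leaving simply $\delta^2(\hat F\wedge\hat\star d\hat F)=\hat F\wedge\hat\star d(\delta^2\hat F)$. So there is no need to control $\delta\hat F$ or $\delta\hat\star$ separately as you suggest; only the decay of $\nabla(\delta^2\hat F)$ matters, and the prefactor $\hat F_{[ab}\hat\varepsilon_{c]klm}\hat g^{kd}\hat g^{le}\hat g^{mf}$ is $O_g(1)$ once the conformal weights are tracked.

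For the second flux your heuristic ``$\hat P^{abcd}$ stays bounded while $\hat\lambda_3^{-1}$ grows'' is misleading, because the integral at infinity is computed with the $g$-geometry and the conformal factor $\hat\lambda_3=\Omega\sim r^{-1}$ rescales every hatted tensor nontrivially in $g$-norm. The paper instead lowers all indices and counts powers of $r$ directly from the ALF definition: $\hat\lambda_3^{-1}=O_g(r)$, $\hat\varepsilon=O_g(r^{-4})$, $\hat g^{-1}=O_g(r^2)$, $\hat P_{klmn}=O_g(r^{-4})$, and $\nabla_e\delta^2 E_{fg}=O_g(r^{-4})$ (since $\delta^2 E=O_g(r^{-3})$ and each $\nabla^{(s)}$-derivative gains one order). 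This gives $(\hat\star\hat U)=O_g(r^{-3})$ against an area element $O_g(r^2)$, hence the flux is $O(r^{-1})\to 0$. No elliptic estimates are invoked; the ALF definition itself supplies the decay of $E_{ab}(s)$ and its derivatives along the curve.
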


\begin{proof}
The strategy is to show that the $O(s^2)$ boundary terms in $\int_{M}(\hat\nabla_{a}\hat{V}^a)(s)$ and $\int_{M}\hat{B}(s)$ vanish, since then, as every term in the $O(s^2)$ part of the right hand side of \eqref{PertExpA} is positive definite, every such term must vanish as well: in particular $\delta(\hat\nabla{\hat{F}})=0$, which is eq. \eqref{linearizedKS}. 

Consider the $O(s^2)$ boundary term in $\int_{M}(\hat\nabla_{a}\hat{V}^a)(s)$, eq. \eqref{PertExpdV}. From \eqref{PertExp3form}, \eqref{PertExp3form2} and \eqref{order1}, we have $\delta^2(\hat{F}\wedge\hat\star d\hat{F})= \hat{F}\wedge\hat\star d(\delta^2\hat{F})$. In indices:
\begin{align}\label{boundary3form}
[\hat{F}\wedge\hat\star d(\delta^2\hat{F})]_{abc} 
 = -\frac{3}{2}\hat{F}_{[ab}\hat\varepsilon_{c]klm}\hat{g}^{kd}\hat{g}^{le}\hat{g}^{mf}\partial_{d}(\delta^2\hat{F})_{ef}.
\end{align}
We need to integrate this form over a 3-surface $\Sigma_{r}=\{r=const.\}$ (see \cite{Biquard:2021gwj, Aksteiner:2023djq} for the definition of $r$), and then take the limit $r\to\infty$. In the following, all computations will be done with respect to the metric $g_{ab}$. Let $\alpha_{abc}$ denote the 3-form \eqref{boundary3form} restricted to $\Sigma_{r}$, then $\alpha_{abc} = f \mu_{abc}$ for some function $f$, where $\mu_{abc}$ is the volume 3-form on $\Sigma_{r}$ induced from the volume 4-form of $g_{ab}$. We have $|\mu|_{g}=\sqrt{3!}$, hence $|f| = \frac{1}{\sqrt{3!}}|\alpha|_{g}$. To estimate $|\alpha|_{g}$, we first use that $\hat{F}_{ab}=\Omega^{2}F_{ab}$, $\hat\varepsilon_{abcd}=\Omega^{4}\varepsilon_{abcd}$, $\hat{g}^{ab}=\Omega^{-2}g^{ab}$, and that $|F|_{g}=\sqrt{2}$, $|\varepsilon|_{g}=\sqrt{4!}$, $|g|_{g}=2$, then 
\begin{align}\label{norms}
 |\hat{F}|_{g} = \sqrt{2} \, \Omega^2, \qquad 
 |\hat\varepsilon|_{g}=\sqrt{4!} \, \Omega^{4}, \qquad 
 |\hat{g}^{-1}|_{g} = 2 \, \Omega^{-2}.
\end{align}
Here, $\Omega=(\lambda_3)^{1/3}=\hat\lambda_3$. From \cite{Biquard:2021gwj}, the ALF condition implies that 
\begin{align}\label{CFdecay}
 \Omega = \frac{1}{r} + \text{lower order}.
\end{align}
Combining \eqref{norms} and \eqref{CFdecay}, we get $\hat{F}_{ab}=O_{g}(r^{-2})$, $\hat\varepsilon_{abcd}=O_{g}(r^{-4})$, and $\hat{g}^{ab}=O_{g}(r^2)$, where the notation $t_{a_1...a_n}=O_{g}(r^{\alpha})$ means that there is a constant $C$ such that $|t|_{g}\leq C r^{\alpha}$ for sufficiently large $r$, see \cite[Def. 2.6]{Aksteiner:2023djq}. Thus, the factor $\hat{F}_{[ab}\hat\varepsilon_{c]klm}\hat{g}^{kd}\hat{g}^{le}\hat{g}^{mf}$ in \eqref{boundary3form} is $O_{g}(1)$. 
To estimate the remaining factor $\partial_{d}(\delta^2\hat{F})_{ef}$, we first use that $\delta^2\hat{F} = \delta^2(\Omega^2)F + \Omega^2\delta^2 F + 2\delta(\Omega^2)\delta F$, so every term on the right side is $O_{g}(r^{-2})$. In addition, due to the skew-symmetry of \eqref{boundary3form}, $\partial_{d}$ can be replaced by the Levi-Civita connection $\nabla_{d}^{(s)}$of $g_{ab}(s)$, so we can use that by assumption each derivative has extra fall-off, hence $\nabla_{d}^{(s)}(\delta^2\hat{F})_{ef}=O_{g}(r^{-3})$. 

Summarizing, so far we have that the 3-form \eqref{boundary3form} on $\Sigma_{r}$ is $\alpha_{abc} \propto |\alpha|_{g}\mu_{abc}$ and $|\alpha|_{g}=O_{g}(r^{-3})$. From \cite[Def. 1.1]{Biquard:2021gwj}, $\mu_{abc}$ goes like $r^2$. Hence, the integrand in the $O(s^2)$ boundary term in \eqref{PertExpdV} is order $r^{-1}$, thus it vanishes in the limit $r\to\infty$.

The last step is then to analyze the $O(s^2)$ boundary term in $\int_{M}\hat{B}(s)$, eq. \eqref{PertExpB}. To do this, we define 
\[
 \hat{U}^{a} = -\tfrac{1}{3}\hat\lambda_{3}^{-1}\hat{P}^{abcd}\nabla_{d}\delta^2E_{bc}
\]
so that \eqref{PertExpB} becomes
\begin{align}\label{PertExpB2}
\int_{M}\hat{B}(s)  = -\frac{s^2}{2!}\oint_{\partial M} \hat\star\hat{U} + O(s^3).
\end{align}
We have
\begin{align}
(\hat\star\hat{U})_{abc} = \frac{1}{3}\hat\lambda_{3}^{-1}\hat\varepsilon_{abcd}\hat{g}^{dk}\hat{g}^{el}\hat{g}^{fm}\hat{g}^{gn}\hat{P}_{klmn}\nabla_{e}\delta^2E_{fg}
\label{*U}
\end{align}
Using that the ALF assumption gives 
$\hat\lambda_{3}^{-1} = O_{g}(r)$, 
$\hat\varepsilon_{abcd} = O_{g}(r^{-4})$, 
$\hat{g}^{dk} = O_{g}(r^{2})$,  
$\hat{P}_{klmn} = O_{g}(r^{-4})$, 
we find that the term $\hat\lambda_{3}^{-1}\hat\epsilon_{abcd}\hat{g}^{dk}\hat{g}^{el}\hat{g}^{fm}\hat{g}^{gn}\hat{P}_{klmn}$ in \eqref{*U} is $O_{g}(r)$. To estimate the remaining factor $\nabla_{e}\delta^2E_{fg}$, we first recall the discussion around \eqref{nablas}, which gives
\[
\nabla_{e}\delta^2E_{fg} = \nabla^{(s)}_{e}\delta^2E_{fg} + O(s).
\]
Therefore, we can now use that $\delta^2E_{fg} = O_{g}(r^{-3})$ and
$\nabla^{(s)}_{e}\delta^2E_{fg} = O_{g}(r^{-4})$, 
where we used that by assumption each $\nabla^{(s)}_{a}$-derivative has extra fall-off. Recalling \eqref{*U}, we then get $(\hat\star\hat{U})_{abc} = O_{g}(r^{-3})$.

Taking now a 3-surface $\Sigma_{r}=\{r=const.\}$, we have $\oint_{\partial M} \hat\star\hat{U}=\lim_{r\to\infty}\oint_{\Sigma_{r}}\hat\star\hat{U}$. Since, as argued before, the induced volume form is $\mu_{abc}=O_{g}(r^2)$, then $\oint_{\Sigma_{r}}\hat\star\hat{U}$ is order $r^{-1}$. Thus, taking the limit $r\to\infty$ we get $\oint_{\partial M} \hat\star\hat{U} = 0$.

In summary, we showed that, for ALF vacuum perturbations of ALF instantons, the $O(s^2)$ boundary terms of $\int_{M}(\hat\nabla_{a}\hat{V}^a)(s)$ and $\int_{M}\hat{B}(s)$ vanish. Hence, \eqref{linearizedKS} holds for ALF instantons.
\end{proof}

\subsection{Proof of Theorem \ref{maintheorem-early}}
\label{sec:proofmaintheorem}

Let $g_{ab}(s)$ be a curve of Riemannian ALF metrics on $M$, where $g_{ab}(0)=g_{ab}$ is Ricci-flat and Hermitian non-K\"ahler, and $\delta g_{ab}= \frac{d}{d{s}}g_{ab}(s)|_{s=0}$ is an infinitesimal Einstein deformation.
The tensor field $J^{a}{}_{b}(s):=\sqrt{2}\:\hat{F}_{bc}(s)\hat{g}^{ca}(s)$ is an almost-complex structure, and Lemma \ref{lemma:linearizedKS} implies that 
\begin{align}
\hat\nabla_{a}^{(s)}J^{b}{}_{c}(s)=O(s^2),
\end{align}
i.e. it is parallel to second order.
It follows that the curve $g_{ab}(s)$ is conformally K\"ahler (in particular, Hermitian) to second order. Hence the first order perturbation $\delta{g}_{ab}$ solves the linearized Hermitian-Einstein condition.
Since Hermitian, ALF instantons have been classified \cite{Biquard:2021gwj, Li:2023jlq} and the moduli space is smooth and finite-dimensional, and since the topology of $M$ is fixed, it follows that $\delta{g}_{ab}$ must be a perturbation with respect to the moduli parameters, up to gauge. 
Thus, any ALF Ricci-flat perturbation is tangent to the moduli space of Hermitian metrics, in other words, infinitesimal rigidity and integrability hold.

\subsection*{Acknowledgements}
We are grateful to Claude LeBrun for clarifying remarks.
The work of BA is supported by the ERC Consolidator/UKRI Frontier grant TwistorQFT EP/Z000157/1. The work of LA is partially supported by the National Natural Science Foundation of China, under Grant Number W2431012.

\end{document}